\newtheorem{thm}{Theorem}[section]
\newtheorem{lem}[thm]{Lemma}
\theoremstyle{definition}
\newtheorem{defn}[thm]{Definition}
\begin{document}

\markboth{V. E. S. Szab\'o}{Characterization of polynomials}

%

%

\title{CHARACTERIZATION OF POLYNOMIALS}

\author{V. E. S\'ANDOR SZAB\'O}

\address{Department of Analysis Institute of Mathematics\\ Budapest University of Technology and Economics\\
M\H{u}egyetem rkp. 3-9.\\
H-1111, Budapest, 
Hungary\\
sszabo@math.bme.hu}

\maketitle

\begin{abstract}
In 1954 it was proved that if $f$ is infinitely differentiable in the
interval $I$ and some derivative (of order depending on $x$) vanishes
at each $x$, then $f$ is a polynomial. Later it was generalized
for multivariable case. A further extension for distributions is possible. If 
$\Omega\subseteq\mathbf{R}^{n}$ is a non-empty connected open set, 
$u\in\mathcal{D}'(\Omega)$ and for every
$\varphi\in\mathcal{D}(\Omega)$ there exists $m(\varphi)\in\mathbf{N}$
such that $\left(D^{\alpha}u\right)(\varphi)=0$ for all multi-indeces
$\alpha$ satisfying $\left\Vert \alpha\right\Vert =m(\varphi)$,
then $u$ is a polynomial (in distributional sense).
\end{abstract}

\keywords{Distributions; multivariable polynomials.}

Mathematics Subject Classification 2000: 35D99, 46F05

\section{Introduction}	

In \cite{2} it was proved that if $f:\mathbf{R}\to\mathbf{R}$,
$f\in C^{\infty}(\mathbf{R})$, and for every $x\in\mathbf{R}$ there
exists $n(x)\in\mathbf{N}$ such that $f^{(n(x))}(x)=0$, then $f$
is a polynomial. Later, see \cite{1}, a similar
result was proved for multi-variable case.

To extend this result for distributions first we introduce some notations
and recall some known results, see e.g. in \cite{4}.

Let $\Omega\subseteq\mathbf{R}^{n}$ be a non-empty open set. In the
discussion of functions of $n$ variables, the term multi-index denotes
an ordered $n$-tuple\[
\alpha=(\alpha_{1},\ldots,\alpha_{n})\]
 of nonnegative integers $\alpha_{i}$ $(i=1,\ldots,n)$. With each
multi-index $\alpha$ is associated the differential operator\[
D^{\alpha}:=\left(\frac{\partial}{\partial x_{1}}\right)^{\alpha_{1}}\cdots\left(\frac{\partial}{\partial x_{n}}\right)^{\alpha_{n}}\]
 whose order is $\left\Vert \alpha\right\Vert :=\alpha_{1}+\ldots+\alpha_{n}$.
If $\left\Vert \alpha\right\Vert =0$ then $D^{\alpha}f:=f$.

We will use also the notation \[D^{k}_{j}:=\left(\frac{\partial}{\partial x_{j}}\right)^{k},
\]
where $j\in\{1,\ldots,n\}$ and $k$ is a nonnegative integer.

The support of a complex function $f$ on any topological space is
the closure of the set $\{x\,|\, f(x)\neq0\}$. 

If $K$ is a compact set in $\Omega$ then $\mathcal{D}_{K}$ denotes
the vector subspace of all complex-valued functions $f\in C^{\infty}(\Omega)$
whose support lies in $K$, $C_{0}^{\infty}(\Omega)$ is the set of
all $f\in C^{\infty}(\Omega)$ whose support is compact and lies in
$\Omega$. It is possible to define a topology on $C^{\infty}(\Omega)$
(generated by the $\left\Vert D^{\alpha}f\right\Vert _{\infty}$ norms)
which makes $C^{\infty}(\Omega)$ into a Fr\'echet space (locally convex
topological vector space whose topology is induced by a complete invariant
metric), such that $\mathcal{D}_{K}$ is a closed subspace of $C^{\infty}(\Omega)$,
$\tau_{K}$ denotes the Fr\'echet space topology of $\mathcal{D}_{K}$.

Choose the non-empty compact sets $K_{i}\subset\Omega$ $(i=1,2,\ldots)$
such that $K_{i}$ lies in the interior of $K_{i+1}$ and $\Omega=\cup K_{i}$,
$\tau_{K_{i}}$ denotes the Fr\'echet space topology of $\mathcal{D}_{K_{i}}$.
Denote $\tau$ the inductive limit topology of $\tau_{K_{i}}$ $(i=1,2,\ldots)$. 

The topological vector space of test functions $\mathcal{D}(\Omega)$
is $C_{0}^{\infty}(\Omega)$ with $\tau$. This topology is independent
of the choice of $K_{i}$ $(i=1,2,\ldots)$. A linear functional on
$\mathcal{D}(\Omega)$ which is continuous with respect to $\tau$
is called a distribution in $\Omega$. The space of all distributions
in $\Omega$ is denoted by $\mathcal{D}'(\Omega)$. 

If $X\subseteq\mathbf{R}^n$ and $Y\subseteq\mathbf{R}^m$ are open sets and $u\in\mathcal{D}'(X)$ and $v\in\mathcal{D}'(Y)$ then their tensor product is $u\otimes v\in\mathcal{D}'(X\times Y)$. (See e.g. \cite{3}, Ch. 4.)

If $W$ is a finite-dimensional subspace of $\mathbf{R}^n$ and $U\subseteq\mathbf{R}^n$ then $\mathrm{proj}_{W} U$ is the orthogonal projection of $U$ on $W$.

In \cite{2} the polynomiality was proved using Baire's
theorem ($\mathbf{R}$ is a complete metric space). In our case the
topology $\tau$ is not locally compact (see \cite{4},
page 17, Theorem 1.22), $\mathcal{D}(\Omega)$ is not metrizable
and not a Baire space (see \cite{4}, page 141, from
last two lines, to page 142, first five lines), so we cannot
apply Baire's theorem to $\mathcal{D}(\Omega)$ immediately. To overcome
this difficulty is not trivial and we need a key lemma, Lemma \ref{lemmacover}.

\section{Lemmas}

To prove our theorem we need some preliminary lemmas. 

\begin{defn}
If $\mathbf{a}=(a_{1},\ldots,a_{n}),\,\mathbf{b}=(b_{1},\ldots,b_{n})\in\mathbf{R}^{n}$
then  $\mathbf{a}<\mathbf{b}$ means $a_{i}<b_{i}$, $i=1,\ldots,n$.
The set $(\mathbf{a},\mathbf{b}):=\{\mathbf{x}\,|\,\mathbf{a}<\mathbf{x}<\mathbf{b}\}$
is a $n$-dimensional open interval.
\end{defn}
\begin{lem}
\label{lemmacover} Suppose $\Gamma$ is an open cover of an open set
$\Omega\subseteq\mathbf{R}^{n}$, and suppose that to each $\omega\in\Gamma$
corresponds a distribution, $\Lambda_{\omega}\in\mathcal{D}'(\omega)$
such that \[
\Lambda_{\omega'}=\Lambda_{\omega''}\quad\mathrm{in}\quad\omega'\cap\omega''\]
 whenever $\omega'\cap\omega''\neq\emptyset$. 

Then there exists a unique $\Lambda\in\mathcal{D}'(\Omega)$ such
that \[
\Lambda=\Lambda_{\omega}\quad\mathrm{in}\quad\omega\]
 for every $\omega\in\Gamma$.
\end{lem}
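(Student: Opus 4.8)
The plan is to patch the local pieces together with a smooth partition of unity and then verify that the result is independent of all choices. I would first dispose of uniqueness, which is the easier half. Suppose $\Lambda_1,\Lambda_2\in\mathcal{D}'(\Omega)$ both restrict to every $\Lambda_\omega$. Given $\varphi\in\mathcal{D}(\Omega)$, its support $K$ is compact, hence covered by finitely many $\omega_1,\dots,\omega_k\in\Gamma$; choosing $\psi_1,\dots,\psi_k\in C_0^\infty$ with $\mathrm{supp}\,\psi_i\subset\omega_i$ and $\sum_i\psi_i=1$ on a neighbourhood of $K$, one writes $\varphi=\sum_i\psi_i\varphi$ with each $\psi_i\varphi\in\mathcal{D}(\omega_i)$. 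Then $(\Lambda_1-\Lambda_2)(\varphi)=\sum_i(\Lambda_1-\Lambda_2)(\psi_i\varphi)=0$, since both functionals agree with $\Lambda_{\omega_i}$ on $\mathcal{D}(\omega_i)$. Hence $\Lambda_1=\Lambda_2$.

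For existence I would define, for $\varphi\in\mathcal{D}(\Omega)$ with support $K$,
\[
\Lambda(\varphi):=\sum_{i=1}^{k}\Lambda_{\omega_i}(\psi_i\varphi),
\]
using a finite subcover $\omega_1,\dots,\omega_k$ of $K$ and a subordinate partition of unity $\{\psi_i\}$ as above. The main obstacle is to show this value does not depend on the chosen cover and partition. I would settle this by comparing two such choices $\{(\omega_i,\psi_i)\}$ and $\{(\omega'_j,\chi_j)\}$ through the doubly indexed family $\{\psi_i\chi_j\}$: each product $\psi_i\chi_j\varphi$ is supported in $\omega_i\cap\omega'_j$, where the compatibility hypothesis gives $\Lambda_{\omega_i}=\Lambda_{\omega'_j}$. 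Using $\sum_j\chi_j=1$ and $\sum_i\psi_i=1$ near $K$ one gets
\[
\sum_i\Lambda_{\omega_i}(\psi_i\varphi)=\sum_{i,j}\Lambda_{\omega_i}(\psi_i\chi_j\varphi)=\sum_{i,j}\Lambda_{\omega'_j}(\psi_i\chi_j\varphi)=\sum_j\Lambda_{\omega'_j}(\chi_j\varphi),
\]
so the two definitions agree. Linearity of $\Lambda$ is then immediate.

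Next I would verify continuity. Since $\tau$ is the inductive limit of the topologies $\tau_{K_i}$, a linear functional on $\mathcal{D}(\Omega)$ is a distribution precisely when its restriction to each $\mathcal{D}_K$ is continuous. Fixing $K$, I may use one fixed finite cover and partition of unity for every $\varphi\in\mathcal{D}_K$, so that $\Lambda(\varphi)=\sum_i\Lambda_{\omega_i}(\psi_i\varphi)$ on $\mathcal{D}_K$. Each map $\varphi\mapsto\psi_i\varphi$ is continuous from $\mathcal{D}_K$ into $\mathcal{D}(\omega_i)$, since by the Leibniz rule the seminorms of $\psi_i\varphi$ are bounded by those of $\varphi$ times constants depending only on the derivatives of $\psi_i$; as each $\Lambda_{\omega_i}$ is continuous, $\Lambda|_{\mathcal{D}_K}$ is continuous, whence $\Lambda\in\mathcal{D}'(\Omega)$.

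Finally, the restriction property $\Lambda=\Lambda_\omega$ in $\omega$ follows by taking, for $\varphi\in\mathcal{D}(\omega)$, a cover of its support lying entirely inside $\omega$: every piece $\psi_i\varphi$ is then supported in $\omega_i\cap\omega\subseteq\omega$, where $\Lambda_{\omega_i}=\Lambda_\omega$, so $\Lambda(\varphi)=\sum_i\Lambda_\omega(\psi_i\varphi)=\Lambda_\omega(\varphi)$. The one genuinely delicate ingredient throughout is the existence of subordinate smooth partitions of unity on $\mathbf{R}^n$, which I would invoke as a standard fact rather than reprove.
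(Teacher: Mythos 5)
Your proof is correct. The paper gives no argument of its own for this lemma, simply citing Theorem 6.21 of Rudin's \emph{Functional Analysis}; your argument --- uniqueness via a finite subordinate partition of unity, existence and well-definedness via the doubly indexed products $\psi_i\chi_j$, continuity verified on each $\mathcal{D}_K$ through the Leibniz rule, and the restriction property from the compatibility hypothesis on $\omega_i\cap\omega$ --- is precisely the standard proof of that cited theorem, so you have essentially reconstructed the proof the paper points to.
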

\begin{proof}
See e.g. \cite{4}, Theorem 6.21.
\end{proof}

\begin{lem}
\label{lemmaonevarpoly} If $\Omega\subseteq\mathbf{R}$ is a non-empty  open interval, $m$ is a non-negative integer and $u\in\mathcal{D}'(\Omega)$ is such that $D^{m}u\equiv 0$ then $u$ is a polynomial (in distributional sense) with degree at most $m-1$.
\end{lem}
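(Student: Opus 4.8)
The plan is to prove this by induction on $m$, reducing the general case to repeated application of the base case $m=1$. The base case asserts that if $u \in \mathcal{D}'(\Omega)$ satisfies $u' \equiv 0$ (the distributional derivative vanishes), then $u$ is a constant. This is a classical fact, but I would want to establish it carefully since everything rests on it. The key identity is that a test function $\psi \in \mathcal{D}(\Omega)$ is the derivative of another test function $\varphi \in \mathcal{D}(\Omega)$ if and only if $\int_\Omega \psi = 0$; indeed, setting $\varphi(x) = \int_{-\infty}^{x} \psi(t)\,dt$ gives a compactly supported smooth antiderivative precisely when the total integral vanishes. Fixing a reference test function $\varphi_0$ with $\int \varphi_0 = 1$, any $\psi$ decomposes as $\psi = \bigl(\int \psi\bigr)\varphi_0 + \psi_1$ where $\psi_1 = \varphi'$ for some $\varphi \in \mathcal{D}(\Omega)$. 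Then $u(\psi) = \bigl(\int\psi\bigr)\,u(\varphi_0) - (D u)(\varphi) = \bigl(\int\psi\bigr)\,u(\varphi_0)$, so $u$ acts as integration against the constant $c = u(\varphi_0)$, i.e. $u$ is that constant.

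For the inductive step, suppose the result holds up to order $m-1$ and assume $D^m u \equiv 0$. Set $v := D^{m-1} u \in \mathcal{D}'(\Omega)$. Then $D v = D^m u \equiv 0$, so by the base case $v$ is a constant, say $v \equiv c$. I would then exhibit an explicit distribution whose $(m-1)$st derivative is $c$, namely the polynomial $p(x) = \tfrac{c}{(m-1)!}x^{m-1}$, which satisfies $D^{m-1} p = c$ in the distributional sense. Consequently $D^{m-1}(u - p) \equiv 0$, and by the induction hypothesis $u - p$ is a polynomial of degree at most $m-2$. Adding back $p$ shows $u$ is a polynomial of degree at most $m-1$, completing the induction.

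The main obstacle is the base case, and specifically verifying that the decomposition $\psi = \bigl(\int\psi\bigr)\varphi_0 + \varphi'$ stays within $\mathcal{D}(\Omega)$: one must check that the antiderivative $\varphi$ of $\psi_1$ is genuinely compactly supported inside the open interval $\Omega$. This is where connectedness of $\Omega$ (here, being an interval) is essential — on a disconnected open set the constant could differ on different components, and a vanishing-integral test function need not have a compactly supported antiderivative globally. I would handle this by noting that for $\psi_1$ supported in a compact subinterval $[a,b] \subset \Omega$ with $\int \psi_1 = 0$, its antiderivative is supported in $[a,b]$ as well, hence lies in $\mathcal{D}(\Omega)$. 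The remaining verifications — that the map $\psi \mapsto (\int\psi)\varphi_0 + \varphi'$ is well-defined and that $u(\varphi_0)$ does not depend on spurious choices — are routine once this point is secured.
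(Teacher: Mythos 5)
Your proof is correct. The paper does not actually prove this lemma---it cites Vladimirov, Exercise 7.23, and merely remarks that the proof given there for $\Omega=\mathbf{R}$ carries over to an arbitrary open interval. Your argument is the standard one behind that citation: the base case $m=1$ via the decomposition $\psi=\bigl(\int\psi\bigr)\varphi_{0}+\varphi'$ with a fixed reference test function $\varphi_{0}$ of unit integral, and the inductive step by subtracting the explicit polynomial $p(x)=\tfrac{c}{(m-1)!}x^{m-1}$ whose $(m-1)$st derivative matches the constant $D^{m-1}u$. What your write-up adds, and what the paper leaves implicit, is the precise point where the interval hypothesis enters: a zero-mean test function supported in a compact $[a,b]\subset\Omega$ has its antiderivative also supported in $[a,b]$, hence again in $\mathcal{D}(\Omega)$; this is exactly the observation that justifies the paper's claim that the proof ``gives this more general result'' beyond $\Omega=\mathbf{R}$. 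The only cosmetic omission is the degenerate case $m=0$, where $D^{0}u=u\equiv 0$ and the conclusion holds trivially with $u$ the zero polynomial; your induction starts at $m=1$, so you should say this one sentence to cover the statement as written.
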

\begin{proof}
See \cite{5}, Exercise 7.23, p. 99. In fact the statement was proved for $\Omega=\mathbf{R}$, but the proof gives this more general result.
\end{proof}
\begin{lem}
\label{lemmapartialderiv}
Assume $\Omega\subseteq\mathbf{R}^n$ is a non-empty connected open set and $m$ is a non-negative integer. If $u\in\mathcal{D}'(\Omega)$ then $D^{m}_{n}u\equiv 0$ if and only if
\begin{equation}
u=\sum_{j=0}^{m-1} v_j(x')\otimes p_j(x_n),
\label{pdesol}
\end{equation}
where $x'=(x_1,\ldots,x_{n-1})\in \mathrm{proj}_{\mathbf{R}^{n-1}}\Omega$, $(x',x_n)\in\Omega$, $v_j\in\mathcal{D}'(\mathrm{proj}_{\mathbf{R}^{n-1}}\Omega)$, and $p_j(x_n)$ is a polynomial (in distributional sense) with degree at most $j$.
\end{lem}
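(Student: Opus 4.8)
The plan is to prove the nontrivial (\emph{only if}) direction by first settling the case of a product box and then globalising; the reverse implication is immediate, since applying $D^m_n$ to a single term $v_j\otimes p_j$ yields $v_j\otimes D^m p_j=0$ because $p_j$ has degree at most $j\le m-1$.

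For the local step, fix an open box $B=B'\times(a,b)\subseteq\Omega$ with $B'\subseteq\mathbf{R}^{n-1}$. For each $\varphi\in\mathcal{D}(B')$ I would introduce the partial pairing $u_\varphi\in\mathcal{D}'((a,b))$ defined by $u_\varphi(\psi)=u(\varphi\otimes\psi)$; continuity follows because $\psi_k\to\psi$ in $\mathcal{D}((a,b))$ forces $\varphi\otimes\psi_k\to\varphi\otimes\psi$ in $\mathcal{D}(B)$. Since the products $\varphi\otimes\psi$ span a dense subspace of $\mathcal{D}(B)$, the hypothesis $D^m_n u=0$ is equivalent to $u(\varphi\otimes\psi^{(m)})=0$ for all $\varphi,\psi$, that is, $D^m u_\varphi=0$ in $\mathcal{D}'((a,b))$. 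Lemma~\ref{lemmaonevarpoly} then gives $u_\varphi=\sum_{j=0}^{m-1}c_j(\varphi)\,x_n^{\,j}$. Choosing test functions $\chi_0,\dots,\chi_{m-1}\in\mathcal{D}((a,b))$ dual to the monomials (so that $\int x_n^{\,i}\chi_j\,dx_n=\delta_{ij}$, which is possible because $1,x_n,\dots,x_n^{m-1}$ are linearly independent on any interval) identifies the coefficient as $c_j(\varphi)=u(\varphi\otimes\chi_j)$. As $\varphi\mapsto\varphi\otimes\chi_j$ is continuous from $\mathcal{D}(B')$ into $\mathcal{D}(B)$, each $v_j:=c_j$ is a distribution in $\mathcal{D}'(B')$, and comparing both sides on the dense set of products yields $u=\sum_{j=0}^{m-1}v_j\otimes x_n^{\,j}$ on $B$, with $p_j(x_n)=x_n^{\,j}$ of degree exactly $j$.

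To globalise I would cover $\Omega$ by such boxes $B_\alpha=B'_\alpha\times I_\alpha$ and aim to glue the coefficient distributions through Lemma~\ref{lemmacover} over $\mathrm{proj}_{\mathbf{R}^{n-1}}\Omega$. Whenever two boxes actually meet, so that $B_\alpha\cap B_\beta=(B'_\alpha\cap B'_\beta)\times(I_\alpha\cap I_\beta)\neq\emptyset$, applying the same dual test functions on $I_\alpha\cap I_\beta$ and using the linear independence of the monomials shows $v_j^{(\alpha)}=v_j^{(\beta)}$ on $B'_\alpha\cap B'_\beta$, so the local coefficients are consistent on genuine overlaps.

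The main obstacle is consistency on overlaps $B'_\alpha\cap B'_\beta$ where the boxes do \emph{not} meet in $\Omega$, which occurs precisely when the fibre $\{x_n:(x'_0,x_n)\in\Omega\}$ over a base point $x'_0$ is disconnected. Here the product structure gives no direct comparison, and connectedness of $\Omega$ (hence of its projection) must be used decisively: given $(x'_0,a)$ and $(x'_0,b)$ in different fibre components, I would join them by a path in $\Omega$, cover the compact image by a finite chain of overlapping boxes, and propagate the equality of coefficients along the chain to identify the two germs at $x'_0$. Arranging the chaining so that it delivers agreement of $v_j^{(\alpha)}$ and $v_j^{(\beta)}$ exactly over $x'_0$, rather than merely along the wandering projection of the path, is the delicate heart of the argument; once it is secured, Lemma~\ref{lemmacover} assembles the $v_j$ into global distributions in $\mathcal{D}'(\mathrm{proj}_{\mathbf{R}^{n-1}}\Omega)$ satisfying \eqref{pdesol}.
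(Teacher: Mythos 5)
Your local step on a box $B=B'\times(a,b)$ is correct and complete, and it is a genuinely different (and tidier) route than the paper's: the paper quotes the case $m=1$, $\Omega=\mathbf{R}^n$ from Friedlander--Joshi and climbs to general $m$ by repeatedly subtracting particular solutions, whereas your partial pairing $u_\varphi(\psi)=u(\varphi\otimes\psi)$ combined with a dual system $\int x_n^i\chi_j\,dx_n=\delta_{ij}$ produces all coefficients $v_j$ at once, with their continuity in $\varphi$ for free, and needs only Lemma \ref{lemmaonevarpoly}.

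The gap is in the globalization, exactly where you flagged it -- and it cannot be closed, because the lemma as stated is false for a general connected $\Omega$. Take $n=2$, $m=1$ and $\Omega_0=(0,3)^2\setminus[1,2]^2$, covered by the four open strips $B=(0,3)\times(0,1)$, $T=(0,3)\times(2,3)$, $L=(0,1)\times(0,3)$, $R=(2,3)\times(0,3)$. Fix $\rho\in\mathcal{D}((1,2))$, $\rho\neq 0$, and define $\Lambda_T$ to be the smooth function $(x_1,x_2)\mapsto\rho(x_1)$ on $T$, and $\Lambda_B=\Lambda_L=\Lambda_R=0$. These agree on all pairwise overlaps (since $\rho$ vanishes on $(0,1)\cup(2,3)$), so Lemma \ref{lemmacover} yields a $u\in\mathcal{D}'(\Omega_0)$ with these restrictions, and $D_2^1u\equiv 0$ on $\Omega_0$ because this holds on each member of the cover. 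Yet $u$ cannot have the form (\ref{pdesol}): with $m=1$ that form is $v_0\otimes p_0$ with $p_0$ constant and $v_0\in\mathcal{D}'((0,3))$, and restricting to $B$ (whose projection is all of $(0,3)$) forces $v_0\otimes p_0=0$, contradicting $u\neq 0$ on $T$. This is precisely your ``wandering chain'' obstruction made concrete: following boxes around the hole, consecutive coefficients agree only over $(0,1)$ and $(2,3)$, which never constrains them over the excluded base interval $(1,2)$, so the agreement ``exactly over $x_0'$'' that you call the delicate heart of the argument cannot be secured by any chaining. You should also know that the paper's own proof contains the same unjustified leap, hidden in the sentence ``it is enough to consider the case $\Omega=I_1\times\cdots\times I_n$'': Lemma \ref{lemmacover} glues distributions, not decompositions of a fixed distribution, and no argument is offered for assembling the box coefficients into global $v_j\in\mathcal{D}'(\mathrm{proj}_{\mathbf{R}^{n-1}}\Omega)$. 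The statement is true if every fiber $\{x_n\,|\,(x',x_n)\in\Omega\}$ is connected (in particular for boxes, where your argument is complete), or if the conclusion is weakened to hold locally; the local version is all that is actually needed later, since in Lemma \ref{lemmatotalpartialderiv} one may instead glue the local polynomials themselves, which do agree on overlaps of a connected set by the identity theorem for polynomials.
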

\begin{proof}
The special case $\Omega=\mathbf{R}^n$ and $m=1$ was proved in \cite{3}, Theorem 4.3.4, but the proof works for $\Omega=I_1\times\cdots\times I_n$, where $I_i\subseteq\mathbf{R}$, $i=1,\ldots,n$ are open intervals.

It is immediate that (\ref{pdesol}) implies that $D^{m}_{n}u\equiv 0$. To prove the converse we use the method of localization and recovering lemma, 
Lemma \ref{lemmacover}. 

Since open intervals form a base for open sets in $\mathbf{R}^{n}$,
we can write $\Omega=\cup_{\omega\in\Gamma}\omega$,
where $\Gamma$ is an open cover of $\Omega$, and the sets $\omega$ have the form 
$\omega=I_1\times\cdots\times I_n$, where $I_i\subseteq\mathbf{R}$, $i=1,\ldots,n$ are open intervals.
So it is enough to consider the case $\Omega=I_1\times\cdots\times I_n$. In the following, the notation  $c_j$ will be used to designate a real constant.

Since we know the statement for $m=1$ assume that $m=2$. Denote $u_1:=D_n^1 u$. Then $D_n^1 u_1\equiv 0$ and by the case $m=1$ we have 
\[
u_1=v_1\otimes c_1,
\]
where $v_1\in\mathcal{D}'(\mathrm{proj}_{\mathbf{R}^{n-1}}\Omega)$. 
From this equation we obtain
\begin{equation}
D_n^1 u=v_1\otimes c_1.
\label{D1u}
\end{equation}
This equation has a particular solution
\begin{equation}
u_{part}=v_1\otimes(c_1 x_n+c_0). 
\label{u_part}
\end{equation}
Equations (\ref{D1u}) and (\ref{u_part}) yield
\[
D_n^1(u-u_{part})=0.
\label{D1is_zero}
\]
Using again the case $m=1$ we obtain 
\[
u-u_{part}=v_0\otimes \tilde{c}_0,
\]
where 
$v_0\in\mathcal{D}'(\mathrm{proj}_{\mathbf{R}^{n-1}}\Omega)$. It follows
that
\[
u=v_1\otimes(c_1 x+c_0)+v_0\otimes \tilde{c}_0.
\]
Iterating this process we obtain (\ref{pdesol}). 
\end{proof}
\begin{lem}
\label{lemmatotalpartialderiv}
Assume $\Omega\subseteq\mathbf{R}^n$ is a non-empty connected open set and $m$ is a non-negative integer. If $u\in\mathcal{D}'(\Omega)$ then $D^{\alpha}u\equiv 0$ for all multi-indeces $\alpha$ satisfying $\left\Vert \alpha\right\Vert =m$ if and only if $u$ is an $n$-variable polynomial (in distributional sense) with total degree at most $m-1$.

\end{lem}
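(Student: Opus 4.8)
The plan is to prove both implications, the ``if'' direction being immediate: for a classical polynomial the distributional derivatives coincide with the classical ones, and each $D^{\alpha}$ with $\|\alpha\|=m$ lowers total degree by $m$, hence annihilates every monomial of degree at most $m-1$. The substance lies in the converse.

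For the converse I would argue by induction on the number of variables $n$, after first localizing exactly as in the proof of Lemma \ref{lemmapartialderiv}: cover $\Omega$ by open boxes $\omega=I_1\times\cdots\times I_n$ and show that on each box $u$ agrees with a classical polynomial of total degree at most $m-1$. The base case $n=1$ is Lemma \ref{lemmaonevarpoly}. On a fixed box, the hypothesis includes $D^{m}_{n}u\equiv 0$, so Lemma \ref{lemmapartialderiv} furnishes a representation $u=\sum_{k=0}^{m-1} w_k(x')\otimes x_n^k$, where I reorganize the polynomials $p_j$ into the monomial basis $1,x_n,\ldots,x_n^{m-1}$ and $w_k\in\mathcal{D}'(I_1\times\cdots\times I_{n-1})$.

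Next I would exploit the transverse conditions. For every multi-index $\beta$ on $\mathbf{R}^{n-1}$ with $\|\beta\|=m$, the multi-index $(\beta,0)$ has order $m$, so $0=D^{(\beta,0)}u=\sum_{k=0}^{m-1}(D^{\beta}w_k)\otimes x_n^k$. Testing this against product functions $\varphi(x')\psi(x_n)$, which are available precisely because $\omega$ is a box, and using that $1,x_n,\ldots,x_n^{m-1}$ are linearly independent on the interval $I_n$, I would split off each term to conclude $D^{\beta}w_k\equiv 0$ for all $k$ and all $\|\beta\|=m$. The inductive hypothesis, applied on the connected set $I_1\times\cdots\times I_{n-1}$, then shows every $w_k$ is a classical polynomial in $x'$ of total degree at most $m-1$, whence $u=\sum_{k=0}^{m-1}w_k(x')\,x_n^k$ is genuinely a classical polynomial on the box.

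Once $u$ is an ordinary polynomial the remaining conditions $D^{\alpha}u\equiv 0$ with $\|\alpha\|=m$ become purely algebraic, and I would close the box estimate with the elementary fact that a polynomial all of whose partial derivatives of order exactly $m$ vanish has total degree at most $m-1$ (decompose into homogeneous parts and observe that a nonzero homogeneous part of degree $d\ge m$ admits a nonvanishing derivative of order $m$). Finally I would glue: on overlapping boxes the local polynomials agree on a nonempty open set, hence coincide as polynomials, and since $\Omega$ is connected a clopen argument produces a single polynomial $P$ with $u=P$ on every box, so $u=P$ on $\Omega$, still of total degree at most $m-1$. The main obstacle is the middle step, namely correctly splitting the tensor representation to reduce the coefficients $w_k$ to polynomials; this is exactly why the localization to boxes, which guarantees enough product test functions, and the induction on $n$ are set up from the outset.
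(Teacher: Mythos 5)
Your proof is correct, and it follows the paper's skeleton at the outset (the trivial ``if'' direction, localization to open boxes, and a first application of Lemma \ref{lemmapartialderiv} in the $x_n$-direction), but the way you handle the coefficient distributions is genuinely different from the paper. The paper does not induct on the dimension: it keeps applying Lemma \ref{lemmapartialderiv} variable by variable, feeding in \emph{mixed} order-$m$ derivatives such as $D_{n-1}^1 D_n^{m-1}u\equiv 0$ and $D_i^1 D_j^1 D_n^{m-2}u\equiv 0$ to deduce $D_{n-1}^1 v_{m-1}=0$, $D_i^1 D_j^1 v_{m-2}=0$, etc., and iterates this peeling until $u$ is exhibited as a sum of tensor products of one-variable polynomials; this bookkeeping produces the total-degree bound $m-1$ directly, but the iteration is only sketched (``following this method\ldots''). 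You instead induct on $n$, use only the purely transverse conditions $D^{(\beta,0)}u=0$ with $\|\beta\|=m$, split the tensor representation by testing against products $\varphi(x')\psi(x_n)$ together with the linear independence of $1,x_n,\ldots,x_n^{m-1}$ on the interval $I_n$ to get $D^{\beta}w_k\equiv 0$, and then invoke the inductive hypothesis. Since this alone only bounds the total degree by $2m-2$, you correctly add a final algebraic step (homogeneous decomposition of a classical polynomial) to recover the bound $m-1$ from the full set of order-$m$ conditions. Your gluing step is also more careful than the paper's: the paper simply appeals to the localization device of Lemma \ref{lemmacover}, whereas you observe explicitly that local polynomials on overlapping boxes coincide as polynomials, so connectedness of $\Omega$ yields a single global polynomial. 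The trade-off: the paper's route obtains the sharp degree bound for free inside the tensor calculus but leaves the iteration informal; yours is a cleaner, fully structured induction at the cost of one extra (elementary) degree argument at the end.
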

\begin{proof}
The``if'' part is clear. To prove the ``only if'' part, similarly as in the proof of the previous lemma, it is enough to consider the case $\Omega=I_1\times\cdots\times I_n$. In the following $c_{j,k}$'s will denote arbitrary constant numbers.

By our assumption $D^{m}_{n}u\equiv 0$. Then Lemma \ref{lemmapartialderiv} gives
\begin{equation}
u=\sum_{j=0}^{m-1} v_j(x')\otimes p_j(x_n),
\label{totalpdesol1}
\end{equation}
where $x'=(x_1,\ldots,x_{n-1})\in \mathrm{proj}_{\mathbf{R}^{n-1}}\Omega$, $(x',x_n)\in\Omega$, $v_j\in\mathcal{D}'(\mathrm{proj}_{\mathbf{R}^{n-1}}\Omega)$, and $p_j(x_n)$ is a polynomial (in distributional sense) with degree at most $j$. Since $D_{n-1}^1 D_{n}^{m-1}u\equiv 0$ we get from (\ref{totalpdesol1})
\[
D_{n-1}^1 v_{m-1}(x')\otimes c_{m-1,n}=0,
\]
that is, 
\[
D_{n-1}^1 v_{m-1}(x')=0.
\]
Lemma \ref{lemmapartialderiv} implies
\[
v_{m-1}(x')=v_{m-1,1}(x'')\otimes c_{m-1,n-1},
\]
where $x''=(x_1,\ldots,x_{n-2})\in \mathrm{proj}_{\mathbf{R}^{n-2}}\Omega$, $(x'',x_{n-1})\in\mathrm{proj}_{\mathbf{R}^{n-1}}\Omega$, $v_{m-1,1}\in\mathcal{D}'(\mathrm{proj}_{\mathbf{R}^{n-2}}\Omega)$. Iterating this process, with $D_{i}^1$ $(i=1,\ldots,n-2)$ instead of $D_{n-1}^1$, we obtain
\begin{equation}
v_{m-1}(x')=c_{m-1,1}\otimes c_{m-1,2}\otimes\ldots\otimes c_{m-1,n-1}.
\label{v_m-1}
\end{equation}
Since $D_{i}^1 D_{j}^1 D_{n}^{m-2}u\equiv 0$ $(1\leq i\leq j\leq n-1)$ we get from (\ref{totalpdesol1})
\[
D_{i}^1 D_{j}^1 v_{m-2}(x')\otimes c_{m-2,n}=0,
\]
that is,
\[
D_{i}^1 D_{j}^1 v_{m-2}(x')=0.
\]
Denote $w:= D_{j}^1 v_{m-2}$. Then we have
\[
D_{i}^1 w(x')=0
\]
for all $i\in\{1,\ldots,n-1\}$, 
which implies
\[
D_{j}^1 v_{m-2}=c_{m-2,1}\otimes\ldots\otimes c_{m-2,n-1}.
\]
Similarly as in the proof of Lemma \ref{lemmapartialderiv}, see (\ref{D1u}), we obtain
\begin{eqnarray}
v_{m-2}(x')&=&  (c_{m-2,1} x_1+c_1)\otimes c_{m-2,2}\otimes\ldots\otimes c_{m-2,n-1} \nonumber\\
& +& (c_{m-2,1})\otimes (c_{m-2,2} x_2+c_2)\otimes\ldots\otimes c_{m-2,n-1} \nonumber\\
& +&\ldots\ldots \nonumber\\
& +&c_{m-2,1}\otimes c_{m-2,2}\otimes\ldots\otimes c_{m-2,n-2}\otimes (c_{m-2,n-1}x_n+c_n).
\label{v_m-2}
\end{eqnarray}
Following this method we obtain that $u$ is the sum of tensor products of one variable  polynomials with total degree at most $m-1$. Noticing that the tensor products of one variable  polynomials can be identified with their usual products we obtain the statement of the lemma.
\end{proof}

\section{Main Result}

In the following theorem we assume that $\Omega$ is a connected set, if $\Omega$ had connected components then we could apply our result for each component.

\begin{thm}
If 
$\Omega\subseteq\mathbf{R}^{n}$ is a non-empty connected open set, 
$u\in\mathcal{D}'(\Omega)$ and for every
$\varphi\in\mathcal{D}(\Omega)$ there exists $m(\varphi)\in\mathbf{N}$
such that $\left(D^{\alpha}u\right)(\varphi)=0$ for all multi-indeces
$\alpha$ satisfying $\left\Vert \alpha\right\Vert =m(\varphi)$,
then $u$ is a polynomial (in distributional sense).
\end{thm}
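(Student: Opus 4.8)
The plan is to reduce the global, ``pointwise in $\varphi$'' hypothesis to a local statement with a \emph{uniform} differential order, to which Lemma~\ref{lemmatotalpartialderiv} applies, and then to glue the resulting local polynomials using connectedness. The obstacle flagged in the introduction is that $\mathcal{D}(\Omega)$ is not a Baire space, so the classical Baire-category argument cannot be run on $\mathcal{D}(\Omega)$ directly. The observation that circumvents this is that although $\mathcal{D}(\Omega)$ fails to be Baire, each piece $\mathcal{D}_{K}$ (for $K\subset\Omega$ compact) is a Fr\'echet space with topology $\tau_{K}$, hence complete, metrizable, and a Baire space.

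First I would fix a compact $K\subset\Omega$ with non-empty interior, say $K=\overline{B(x_{0},r)}$ with $\overline{B(x_{0},r)}\subset\Omega$. For each non-negative integer $m$ put
\[
A_{m}^{K}:=\bigl\{\varphi\in\mathcal{D}_{K}\;:\;(D^{\alpha}u)(\varphi)=0\ \text{for all}\ \alpha\ \text{with}\ \left\Vert\alpha\right\Vert=m\bigr\}.
\]
Each functional $\varphi\mapsto(D^{\alpha}u)(\varphi)$ is continuous on $\mathcal{D}_{K}$ (the inclusion $\mathcal{D}_{K}\hookrightarrow\mathcal{D}(\Omega)$ is continuous), so every $A_{m}^{K}$ is a closed linear subspace of $\mathcal{D}_{K}$, being a finite intersection of kernels. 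The hypothesis says precisely that $\bigcup_{m}A_{m}^{K}=\mathcal{D}_{K}$. Since $\mathcal{D}_{K}$ is Baire, some $A_{m_{0}}^{K}$ has non-empty interior. Here I would invoke the standard fact that a linear subspace of a topological vector space with non-empty interior is the whole space (translate an interior point to the origin, then absorb by scaling). Hence $A_{m_{0}}^{K}=\mathcal{D}_{K}$, which means $D^{\alpha}u\equiv0$ on the open set $\mathrm{int}(K)$ for every $\alpha$ with $\left\Vert\alpha\right\Vert=m_{0}$.

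Next I would apply Lemma~\ref{lemmatotalpartialderiv} on the connected open set $\mathrm{int}(K)$ (a ball), concluding that $u$ coincides there with a polynomial of total degree at most $m_{0}-1$. Running this over a cover of $\Omega$ by such balls shows that $u$ is \emph{locally} a polynomial. To glue, note that if two balls of the cover overlap, then on the non-empty open intersection $u$ equals both local polynomials, and two polynomials that agree as distributions on a non-empty open set are identical (their difference integrates to zero against all test functions there, hence vanishes on an open set, and a polynomial vanishing on an open set is the zero polynomial). Because $\Omega$ is connected, any two balls are joined by a finite chain of pairwise-overlapping balls, so all the local polynomials coincide with a single polynomial $P$, whence $u=P$ on all of $\Omega$; alternatively the gluing can be phrased through Lemma~\ref{lemmacover}.

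I expect the main obstacle to be precisely the first genuine step: recognizing that one must descend from the non-Baire space $\mathcal{D}(\Omega)$ to the Fr\'echet subspaces $\mathcal{D}_{K}$ in order to access Baire category, together with the subspace-with-interior argument that promotes a single $A_{m_{0}}^{K}$ to all of $\mathcal{D}_{K}$ and thereby produces a uniform order $m_{0}$ on $\mathrm{int}(K)$. Everything afterwards is a routine application of Lemma~\ref{lemmatotalpartialderiv} and the connectedness gluing.
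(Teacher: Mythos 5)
Your proposal is correct and follows essentially the same route as the paper: restrict to the Fr\'echet (hence Baire) spaces $\mathcal{D}_{K}$, write each as a countable union of the closed linear subspaces on which all order-$m$ derivatives of $u$ vanish, promote the one with non-empty interior to the whole space, apply Lemma~\ref{lemmatotalpartialderiv} to get a local polynomial of uniform degree, and glue. The only difference is cosmetic: you cover $\Omega$ by closed balls and glue by chain-connectedness plus the identity theorem for polynomials, whereas the paper runs the argument on the exhaustion $K_{i}\subset\mathrm{int}\,K_{i+1}$ and invokes Lemma~\ref{lemmacover}; your variant, if anything, deals more cleanly with the fact that $\mathrm{int}\,K_{i}$ need not be connected.
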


\begin{proof}
In the first step we prove that for each $i=1,2,\ldots$ there exists
a number $\gamma^{(i)}\in\mathbf{N}$ such that $D^{\alpha}u\equiv0$ 
in $\mathcal{D}_{K_{i}}$ for all multi-indeces $\alpha$ satisfying $\left\Vert \alpha\right\Vert =\gamma^{(i)}$
. Denote \[
Z^{(m)}:=\{\varphi\in\mathcal{D}_{K_{i}}\,|\,\left(D^{\alpha}u\right)(\varphi)=0,\mathrm{for\, all}\,\alpha\,\mathrm{satisfying}\left\Vert \alpha\right\Vert =m\},\quad m\in\mathbf{N}.\]
 Obviously\[
\mathcal{D}_{K_{i}}=\bigcup_{m\in\mathbf{N}}Z^{(m)}.\]
 Here $Z^{(m)}$ is closed, because \[
Z^{(m)}=\bigcap_{\left\Vert \alpha\right\Vert =m}\{\varphi\in\mathcal{D}_{K_{i}}\,|\,\left(D^{\alpha}u\right)(\varphi)=0\}\]
 and $D^{\alpha}u$ is continuous. Since $\mathcal{D}_{K_{i}}$ is
a complete metrizable space, Baire's theorem implies that there exists $\gamma^{(i)}\in\mathbf{N}$
such that $\mathrm{int}\, Z^{(\gamma^{(i)})}\neq\emptyset$ ($\mathrm{int}$
is in the topology $\tau_{K_{i}}$). Since $Z^{(\gamma^{(i)})}$ is a
linear subspace in $\mathcal{D}_{K_{i}}$, we obtain $\mathcal{D}_{K_{i}}\equiv Z^{(\gamma^{(i)})}$. 

In the second step we consider the one and multivariable case.\\
If $n=1$ then applying  Lemma \ref{lemmaonevarpoly} the relation  $D^{\gamma^{(i)}}u\equiv0$ implies $u$ is a polynomial
(in distributional sense) in $\mathcal{D}_{K_{i}}$. If $n>1$ then Lemma  \ref{lemmatotalpartialderiv} yields that $u$ is a multivariable polynomial (in distributional sense) in $\mathcal{D}_{K_{i}}$. Since $K_{i}\subset\mathrm{int}\, K_{i+1}$ and
$\mathcal{D}_{K_{i}}\subset\mathcal{D}_{K_{i+1}}$,  by Lemma \ref{lemmacover} we obtain that $u$ is a
polynomial (in distributional sense) in $\mathcal{D}'(\Omega)$.

So the proof of the theorem has been completed.
\end{proof}



\section*{Acknowledgments}

The author thanks the referee for his/her valuable remarks.


\begin{thebibliography}{00}

\bibitem{1} A. B. Boghossian and P. D. Johnson jun., A pointwise condition for an infinitely differentiable function of
several variables to be a polynomial, {\it J. Math. Anal. Appl.} {\bf 151}(1) (1990) 17--19.

\bibitem{2} E. Corominas and F. S. Balaguer, Conditions for an infinitely differentiable function to be a polynomial, {\it Revista
Mat. Hisp.-Amer.} {\bf 14}(4) (1954) 26--43 (in Spanish).

\bibitem{3} F. G. Friedlander and M. Joshi, {\it Introduction to the Theory of Distributions} 
(Cambridge University Press, New York, 1999).

\bibitem{4} W. Rudin, {\it Functional Analysis} 
(McGraw-Hill, New York, 1973).

\bibitem{5} V. S. Vladimirov, {\it A Collection of Problems on
the Equations of Mathematical Physics} 
(Mir Publishers, Moscow, 1986).

\end{thebibliography}
\end{document}